\newcommand{\bib}{\clearpage
\addtocounter{page}{1}
\addcontentsline{toc}{chapter}{\numberline{}\bibname}
\addtocounter{page}{-1}
\bibliographystyle{plain}
\bibliography{biblio14}}
\newtheorem{theorem}{Theorem}[section]
\newtheorem{Definition}[theorem]{Definition}
\newenvironment{theorem*}[1]{\medskip
                            \noindent
                            {\bf Theorem #1. }\ %
                            \begingroup \sl}
                            {\endgroup\medskip}
\title{A Consistency Proof for Some Restrictions of Tait's Reflection Principles}
\author[$\mathrm{M^{\lowercase{c}}Callum}$]{Rupert $\mathrm{M^{\lowercase{c}}Callum}$}
\begin{document}

\maketitle

\begin{abstract}

In \cite{Tait2005a} Tait identifies a set of reflection principles which he calls $\Gamma^{(2)}_{n}$-reflection principles which Peter Koellner has shown to be consistent relative to $\kappa(\omega)$, the first $\omega$-Erd\"os cardinal, in \cite{Koellner2003}. Tait also goes on in the same work to define a set of reflection principles which he calls $\Gamma^{(m)}_{n}$-reflection principles; however Koellner has shown that these are inconsistent when $m>2$ in \cite{Koellner2009}, but identifies restricted versions of them which he proves consistent relative to $\kappa(\omega)$. In this paper we introduce a new large-cardinal property with an ordinal parameter $\alpha>0$, calling those cardinals which satisfy it $\alpha$-reflective cardinals. Its definition is motivated by the remarks Tait makes in \cite{Tait2005a} about why reflection principles must be restricted when parameters of third or higher order are introduced. We prove that if $\kappa$ is $\beth_{\kappa+\alpha+1}$-supercompact and $0<\alpha<\kappa$ then $\kappa$ is $\alpha$-reflective. Furthermore we show that $\alpha$-reflective cardinals relativize to $L$, and that if $\kappa(\omega)$ exists then the set of cardinals $\lambda<\kappa(\omega)$, such that $\lambda$ is $\alpha$-refective for all $\alpha$ such that $0<\alpha<\lambda$, is a stationary subset of $\kappa(\omega)$. We show that an $\omega$-reflective cardinal satisfies some restricted versions of $\Gamma^{(m)}_{n}$-reflection, as well as all the reflection properties which Koellner proves consistent in \cite{Koellner2009}.

\end{abstract}

\thanks{Part of this paper was written while I was a Research Intensive Academic at the Australian Catholic University. I made further revisions to it while I was an Adjunct Lecturer at the University of New South Wales, and while I held a post-doctoral position at the University of M\"unster. I am grateful to these institutions for their support. I am also thankful to Peter Koellner for giving me helpful feedback on an early draft.}

\section{Introduction}

We are going to investigate reflection principles, which postulate the existence of a level of the universe $V_{\kappa}$, whose properties reflect down to some lower level $V_{\beta}$ where $\beta<\kappa$. It is useful to begin by considering reflection principles involving second-order parameters only. In later sections we will consider the issues which arise when one introduces higher-order parameters.

\bigskip

The cardinals yielded by these reflection principles involving second-order parameters only are called ``indescribable cardinals". These principles assert the existence of a cardinal $\kappa$ such that certain statements true in $V_{\kappa}$ hold when relativized to a level $V_{\beta}$ where $\beta<\kappa$. The strength of the reflection principles increase as one increases the expressive power of the language in which the statements are formulated, and the complexity of the formulas which express them. For example, one may consider the case where the language $\textsf{L}$ in which the statements are expressed is the union of the $n$th-order languages of set theory for all $n<\omega$. We denote the order of a variable with a superscript, so that $X^{(m)}$ is a variable of $m$th order. Second-order variables range over classes, for example. If a formula $\varphi$ in the language $\textsf{L}$ is relativized to $V_{\kappa}$, then the variables of $m$th order range over $V_{\kappa+m-1}$. Here $V_{\kappa}$ is being treated as the universe.

\begin{Definition} We say that a formula in the language $\textsf{L}$ is a $\Pi^{m}_{0}$-formula if the only quantified variables it contains are at most $m$th order. \newline We say that a formula in the language $\textsf{L}$ is a $\Pi^{m}_{1}$-formula if it consists of a block of universal $(m+1)$th order quantifiers tacked on to the beginning of a $\Pi^{m}_{0}$-formula. \newline We say that a formula in the language $\textsf{L}$ is $\Sigma^{m}_{k+1}$ if it consists of a block of existential $(m+1)$th-order quantifiers tacked on to the beginning of a $\Pi^{m}_{k}$-formula. \newline We say that a formula in the language $\textsf{L}$ is $\Pi^{m}_{k+1}$ if it consists of a block of universal $(m+1)$th-order quantifiers tacked on to the beginning of a $\Sigma^{m}_{k}$-formula. \end{Definition}

\begin{Definition} If $\varphi$ is formula in the language $\textsf{L}$, we denote by $\varphi^{\beta}$ the result of relativizing every $m$th-order quantifier to $V_{\beta+m-1}$. If $X^{(2)}$ is a second-order variable we abbreviate $X^{(2)}\cap V_{\beta}$ to $X^{(2),\beta}$. \end{Definition}

\begin{Definition} If $\Omega$ is a class of formulas of $\textsf{L}$, we say that $\kappa$ is $\Omega$-indescribable if for all formulas $\varphi\in\Omega$ whose only free variable is second-order, for all sets $U\subset V_{\kappa}$, $\varphi^{\kappa}(U)\Longrightarrow\exists\beta<\kappa$ $\varphi^{\beta}(U^{\beta})$. We say that $\kappa$ is totally indescribable if it is $\Pi^{m}_{n}$-indescribable for all $m, n>0$. \end{Definition}

\begin{Definition} Suppose that $\alpha$ is an ordinal. We say that $\kappa$ is $\alpha$-indescribable if for all $\Pi^{1}_{0}$ formulas $\varphi$ in the language $\textsf{L}$ whose only free variable is second-order, for all sets $U\subset V_{\kappa}$, $V_{\kappa+\alpha}\models \varphi(U)\Longrightarrow \exists\beta<\kappa$ $V_{\beta+\alpha}\models \varphi(U^{\beta})$ for some $\beta<\kappa$. (This definition is due to Jensen.) \end{Definition}

\begin{Definition} We say that $\kappa$ is absolutely indescribable if $\kappa$ is $\alpha$-indescribable for all $\alpha<\kappa$. (This definition appears in \cite{Tait2005a}.) \end{Definition}

\begin{Definition} We say that $\kappa$ is extremely indescribable if for all formulas $\Pi^{1}_{0}$ formulas $\varphi$ in the language $\textsf{L}$ whose only free variable is second-order, for all sets $U\subset V_{\kappa}$, $V_{\kappa+\kappa}\models\varphi(U)\Longrightarrow\exists \beta<\kappa$ $V_{\beta+\beta}\models\varphi(U^{\beta})$. (This definition is due to Harvey Friedman.) \end{Definition}

Here we are giving examples of cardinals $\kappa$ such that $V_{\kappa}$ satisfies reflection of formulas with second-order parameters. Let us next consider what happens when we move to parameters of third or higher order.

\section{Reflection involving parameters of third or higher order}

We have already defined $A^{(2),\beta}$ when $A^{(2)}$ is a second-order parameter. We define $A^{(m+1),\beta}=\{B^{(m),\beta}\mid B^{(m)}\in A^{(m+1)}\}$ for all integers $m\geq 2$. We say that $\kappa$ satisfies reflection with $m$th-order parameters for all formulas in a class $\Omega$ if, whenever $\varphi^{\kappa}(U^{(m)})$ for some $U^{(m)}\in V_{\kappa+m-1}$, there exists a $\beta<\kappa$ such that $\varphi^{\beta}(U^{(m),\beta})$. It is inconsistent to postulate the existence of cardinal $\kappa$ which satisfies reflection for all formulas in $\textsf{L}$, where $\textsf{L}$ is the language defined in the second paragraph of the Introduction, with third-order parameters. This was observed by Reinhardt in \cite{Reinhardt74}.

\bigskip

To see this, let $X^{(3)}$ be a third-order variable and let $\varphi$ be the assertion that every element of $X^{(3)}$ is a bounded subset of $\mathrm{On}$. This assertion can be written as a sentence in $\textsf{L}$ with $X^{(3)}$ as the only free variable. Now, suppose that $\kappa$ satisfies reflection for such sentences with third-order parameters. Let $U^{(3)}=\{\{\xi\mid\xi<\alpha\}\mid\alpha\in\mathrm{On}\cap\kappa\}$. We have $\varphi^{\kappa}(U^{(3)})$. So by the hypothesis about $\kappa$ we must have $\varphi^{\beta}(U^{(3),\beta})$ for some $\beta<\kappa$. But this is impossible because $U^{(3),\beta}$ contains the set $\{\xi\mid\xi<\beta\}$, which is not bounded in $\mathrm{On}\cap V_{\beta}$. Thus no ordinal $\kappa$ satisfies reflection for formulas in $\textsf{L}$ with third-order parameters.

\bigskip

This means that in order to formulate consistent reflection principles for formulas with third-order parameters or higher one must constrain the formulas relativized in some way. Let us consider what Tait writes in \cite{Tait2005a} about this issue.

\bigskip

``One plausible way to think about the difference between reflecting $\varphi(A)$ when $A$ is second-order and when it is of higher-order is that, in the former case, reflection is asserting that, if $\varphi(A)$ holds in the structure $\langle R(\kappa),\in,A \rangle$, then it holds in the substructure $\langle R(\beta),\in, A^{\beta} \rangle$ for some $\beta<\kappa \ldots$ But, when $A$ is higher-order, say of third-order this is no longer so. Now we are considering the structure $\langle R(\kappa), R(\kappa+1), \in, A \rangle$ and $\langle R(\beta), R(\beta+1), \in, A^{\beta} \rangle$. But, the latter is not a substructure of the former, that is the `inclusion map' of the latter structure into the former is no longer single-valued: for subclasses $X$ and $Y$ of $R(\kappa), X\neq Y$ does not imply $X^{\beta}\neq Y^{\beta}$. Likewise for $X\in R(\beta+1), X\notin A$ does not imply $X^{\beta}\notin A^{\beta}$. For this reason, the formulas that we can expect to be preserved in passing from the former structure to the latter must be suitably restricted and, in particular, should not contain the relation $\notin$ between second- and third-order objects or the relation $\neq$ between second-order objects."

\bigskip

Now, suppose that we are reflecting a formula $\varphi$ of the form

\bigskip

\noindent $\forall X_{1}^{(m_{1})} \exists Y_{1}^{(n_{1})} \forall X_{2}^{(m_{2})} \exists Y_{2}^{(n_{2})} \cdots \forall X_{k}^{(m_{k})} \exists Y_{k}^{(n_{k})} $ \newline $\psi(X_{1}^{(m_{1})}, Y_{1}^{(n_{1})}, X_{2}^{(m_{2})}, Y_{2}^{(n_{2})}, \ldots X_{k}^{(m_{k})}, Y_{k}^{(n_{k})}, A_{1}^{(l_{1})}, A_{2}^{(l_{2})}, \ldots A_{j}^{(l_{j})}) $

\bigskip

This can be re-written as

\bigskip

\noindent $\exists f_{1} \exists f_{2} \cdots \exists f_{k} \forall X_{1}^{(m_{1})} \forall X_{2}^{(m_{2})} \cdots \forall X_{k}^{(m_{k})}$ \newline $\psi(X_{1}^{(m_{1})}, f_{1}(X_{1}^{(m_{1})}), X_{2}^{(m_{2})}, f_{2}(X_{1}^{(m_{1})}, X_{2}^{(m_{2})}), \ldots X_{k}^{(m_{k})}, f_{k}(X_{1}^{(m_{1})}, X_{2}^{(m_{2})}, \ldots X_{k}^{(m_{k})}),$ \newline $A_{1}^{(l_{1})}, A_{2}^{(l_{2})}, \ldots A_{j}^{(l_{j})})$

\bigskip

The point is that if this formula, without the existential function quantifiers, is conceived of as holding in the structure $\langle V_{\kappa}, V_{\kappa+1}, \ldots V_{\kappa+l}, \in, f_{1}, \ldots f_{k}, A_{1}^{(l_{1})}, A_{2}^{(l_{2})}, \ldots A_{j}^{(l_{j})} \rangle$, where $l=\mathrm{max}(m_{1}, n_{1}, \ldots m_{k}, n_{k}, l_{1}-1, \ldots l_{j}-1)-1$, and we try to reflect down to the structure $\langle V_{\beta}, V_{\beta+1}, \ldots V_{\beta+l}, \in, f_{1}^{\beta}, \ldots f_{k}^{\beta}, A_{1}^{(l_{1}),\beta}, A_{2}^{(l_{2}),\beta}, \ldots A_{j}^{(l_{j}),\beta} \rangle$ for some $\beta<\kappa$, then the functions $f_{i}^{\beta}$ are no longer necessarily single-valued. This consideration suggests the following reflection principle.

\begin{Definition} We define $l(\gamma)=\gamma-1$ if $\gamma<\omega$ and $l(\gamma)=\gamma$ otherwise. We extend the definition $A^{(m+1),\beta}=\{B^{(m),\beta}\mid B^{(m)}\in A^{(m+1)}\}$ to $A^{(\alpha),\beta}=\{B^{\beta}\mid B\in A^{(\alpha)}\}$ for all ordinals $\alpha>0$, it being understood that if $V_{\kappa}$ is the domain of discourse then $A^{(\alpha)}$ ranges over $V_{\kappa+l(\alpha)}$. \end{Definition}

\begin{Definition} \label{reflective} Suppose that $\alpha, \kappa$ are ordinals such that $0<\alpha<\kappa$ and that\newline

\noindent (1) $S=\langle \{V_{\kappa+\gamma}\mid \gamma<\alpha\}, \in, f_{1}, f_{2}, \ldots f_{k}, A_{1}, A_{2}, \ldots A_{n} \rangle$ is a structure where each $f_{i}$ is a function $V_{\kappa+l(\gamma_{1})}\times V_{\kappa+l(\gamma_{2})}\times \ldots V_{\kappa+l(\gamma_{i})} \rightarrow V_{\kappa+\zeta_{i}}$ for some ordinals $\gamma_{1}, \gamma_{2}, \ldots \gamma_{i}, \zeta_{i}$ such that $l(\gamma_{1}), l(\gamma_{2}), \ldots l(\gamma_{i}), \zeta_{i} <\alpha$, and each $A_{i}$ is a subset of $V_{\kappa+\delta_{i}}$ for some $\delta_{i}<\alpha$ \newline
(2) $\varphi$ is a formula true in the structure $S$, of the form \newline
$\forall X_{1}^{(\gamma_{1})} \forall X_{2}^{(\gamma_{2})} \cdots \forall X_{k}^{(\gamma_{k})}$ \newline $\psi(X_{1}^{(\gamma_{1})}, f_{1}(X_{1}^{(\gamma_{1})}), X_{2}^{(\gamma_{2})}, f_{2}(X_{1}^{(\gamma_{1})}, X_{2}^{(\gamma_{2})}), \ldots X_{k}^{(\gamma_{k})}, f_{k}(X_{1}^{(\gamma_{1})}, X_{2}^{(\gamma_{2})}, \ldots X_{k}^{(\gamma_{k})}),$ \newline $A_{1}, A_{2}, \ldots A_{j})$ with $\psi$ a formula with first-order quantifiers only \newline
(3) there exists a $\beta$ such that $\alpha<\beta<\kappa$ and a mapping $j:V_{\beta+\alpha}\rightarrow V_{\kappa+\alpha}$, such that $j(X)\in V_{\kappa+\gamma}$ whenever $X\in V_{\beta+\gamma}$, $j(X)=X$ for all $X\in V_{\beta}$, and $j(X)\in j(Y)$ whenever $X\in Y$, and such that, in the structure \newline $S^{\beta}=\langle V_{\beta}, \{V_{\beta+\gamma}\mid 0<\gamma<\alpha\}, \{V_{\kappa+\gamma}\mid 0<\gamma<\alpha\}, \in, j, f_{1}, f_{2}, \ldots f_{k}, A_{1}, A_{2} \ldots A_{n} \rangle$, with variables of order $\gamma$ ranging over $V_{\beta+l(\gamma)}$, we have \newline

\noindent $\forall X_{1}^{(\gamma_{1})} \forall X_{2}^{(\gamma_{2})} \cdots \forall X_{k}^{(\gamma_{k})}$ \newline $\psi(j(X_{1}^{(\gamma_{1})}), f_{1}(j(X_{1}^{(\gamma_{1})})), j(X_{2}^{(\gamma_{2})}), f_{2}(j(X_{1}^{(\gamma_{1})}), j(X_{2}^{(\gamma_{2})})), \ldots j(X_{k}^{(\gamma_{k})}), \newline f_{k}(j(X_{1}^{(\gamma_{1})}), j(X_{2}^{(\gamma_{2})}), \ldots j(X_{k}^{(\gamma_{k})})), A_{1}, A_{2}, \ldots A_{n})$ \newline

\noindent Then we say that the formula $\varphi$ with parameters $A_{1}, A_{2}, \ldots A_{n}$ reflects down from $S$ to $\beta$. If for all structures $S$ of the above form and for all formulas $\varphi$ of the above form true in the structure $S$, this occurs for some $\beta<\kappa$, then $\kappa$ is said to be $\alpha$-reflective. \end{Definition}

It is not clear whether it should be said that the existence of $\alpha$-reflective cardinals follows from the iterative conception of set, because the definition involves a function $j$ which guides the reflection. The idea that the existence of indescribable cardinals follows from the iterative conception of set can be motivated by an idea of Tait \cite{Tait2005a} which Koellner has called the Relativised Cantorian Principle. Cantor wrote that if an initial segment of the sequence of ordinals is only a set then it has a least strict upper bound. The phrase ``is only a set" can be replaced with other conditions for the existence of a least strict upper bound, and for any given set of conditions it then becomes plausible to postulate the existence of a level of the universe which is a closure point for the process of obtaining new ordinals in this way. The indescribable cardinals can then be motivated by the idea that if a level of the universe is describable then it cannot be all of $V$, and so this is a reasonable condition for the existence of a least strict upper bound of all the ordinals obtained so far, and it is reasonable to postulate the existence of a level of the universe which is a closure point for the process of obtaining new ordinals in this way. So whether or not one should similarly admit the existence of reflective cardinals as defined above depends on whether or not one thinks it reasonable for the nonexistence of a function $j$ guiding the reflection of the formula is a sufficient reason to think that the level of the universe obtained so far is not all of $V$, and whether it is reasonable to postulate the existence of a level of the universe which is a closure point for the process of obtaining new ordinals in this way. This may seem doubtful. The Relativised Cantorian Principle always runs the risk of proving too much.

\bigskip

There is however another way to motivate a justification for these cardinals. In \cite{Hellman94}, Hellman discusses the notion of a level of the universe $V_{\alpha}$ ``Putnam-satisfying" a higher-order formula with parameters, with respect to a particular valuation of the free variables. The definition is by induction on the complexity of the formula, with all the usual Tarski clauses, except that we say that $V_{\alpha}$ Putnam-satisfies a formula $\varphi$ with respect to a valuation $E$ of the free variables, starting with an existential quantifier $\exists X$, where $X$ may be a variable of first or higher order, if there exists a $\beta>\alpha$ such that $V_{\beta}$ Putnam-satisfies $\varphi$ with the initial existential quantifier deleted, with respect to some valuation $E'$ agreeing with $E$ at every variable except $X$. (Actually, Hellman makes the further requirement that $\alpha$ and $\beta$ be inaccessible, but we shall not include this requirement.) The levels $V_{\alpha}$ with $\alpha$ inaccessible all agree on what first-order formulas they Putnam-satisfy, but not for the higher-order formulas. 

\bigskip

Suppose we adopt a modified version of Putnam-satisfaction where we start with the standard notion of satisfaction for first-order formulas, and for formulas with higher-order quantifiers we define the notion by induction on the complexity of the formula as before. If one postulates a reflection principle whereby if a level of the universe $V_{\kappa}$ satisfies a higher-order formula with parameters then a lower level Putnam-satisfies it in this modified sense with the same parameters, then one can then proceed to prove the existence (assuming the axiom of choice) of the reflective cardinals discussed here. The mapping $j$ can be constructed (assuming choice) from the Skolem functions witnessing the Putnam-satisfaction, at the lower level, of the formula. The image of the range of the Skolem functions under the rank function can be forced to be bounded above by $\kappa$ by including all of $V_{\kappa+\alpha}$, with $\alpha$ sufficiently large, as a parameter, and modifying the formula to be reflected so as to include a clause saying that all variables in the range of the Skolem functions must be contained in this parameter. Conversely it is easy to see that our reflective cardinals are ``Putnam-reflective" in the sense just defined. This might be thought to be a somewhat more compelling justification.

\bigskip

We now give a consistency proof for this large cardinal property.

\begin{theorem} Suppose that $0<\alpha<\kappa$ and $\kappa$ is $\beth_{\kappa+\alpha+1}$-supercompact. Then $\kappa$ is $\alpha$-reflective. \end{theorem}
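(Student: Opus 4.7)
The plan is to use the $(\alpha+1)$-strong embedding and then pull the reflection below $\kappa$ by elementarity. Fix an elementary embedding $i\colon V\to M$ with critical point $\kappa$ witnessing that $\kappa$ is $(\alpha+1)$-strong, so $V_{\kappa+\alpha+1}\subseteq M$; I shall take $i$ to be derived from a suitable $(\kappa,\kappa+\alpha+1)$-extender so that in addition $i\restriction V_{\kappa+\alpha}\in M$. Given any structure $S$ and formula $\varphi$ as in Definition 2.2 with $\varphi$ true in $S$, note that $S$ together with all the $f_\ell$ and the $A_j$ lie in $V_{\kappa+\alpha+1}\subseteq M$; hence by elementarity, $\varphi$ (being finite, fixed by $i$) is true in $i(S)=\langle\{V_{i(\kappa)+\gamma}\mid\gamma<\alpha\},\in,i(f_1),\ldots,i(f_k),i(A_1),\ldots,i(A_n)\rangle$ inside $M$.

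I will verify that inside $M$ the pair $(\beta,j)=(\kappa,\,i\restriction V_{\kappa+\alpha})$ witnesses the reflection of $\varphi$ from $i(S)$ down to $\kappa$ in the sense of clause (3) of Definition 2.2. The three structural requirements on $j$ are each immediate: rank-preservation $j(X)\in V_{i(\kappa)+\gamma}$ whenever $X\in V_{\kappa+\gamma}$ holds by elementarity; $j\restriction V_\kappa=\mathrm{id}$ since $\kappa=\mathrm{crit}(i)$; and $\in$-preservation $X\in Y\Rightarrow j(X)\in j(Y)$ holds by elementarity again. Note that $\kappa>\alpha$ by hypothesis, so the requirement $\alpha<\beta$ is met with $\beta=\kappa$.

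The core step is the Skolem-function assertion at the end of clause (3). Using $j=i\restriction V_{\kappa+\alpha}$ together with the identity $i(f_\ell)(i(X_1),\ldots,i(X_\ell))=i(f_\ell(X_1,\ldots,X_\ell))$, verifying it in $(i(S))^\kappa$ reduces to showing that for every choice of $X_\ell\in V_{\kappa+l(\gamma_\ell)}$ one has $M\models\psi(X_\ell,\,i(f_\ell(X_1,\ldots,X_\ell)),\ldots,i(A_1),\ldots,i(A_n))$ with first-order quantifiers ranging over $V_\kappa$. By hypothesis, in $V$ we have $\psi(X_\ell,\,f_\ell(X_1,\ldots,X_\ell),\ldots,A_1,\ldots,A_n)$ with first-order quantifiers over $V_\kappa$. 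The key absoluteness observation is that since $i\restriction V_\kappa=\mathrm{id}$, for every set $B$ and every $y\in V_\kappa$ we have $y\in B$ in $V$ iff $y\in i(B)$ in $M$. Induction on the complexity of $\psi$ then establishes the equivalence of the two instances: atomic subformulas mixing first-order and higher-order terms are handled by this observation, purely higher-order atomic subformulas by the elementarity of $i$, and the first-order quantifiers by the fact that their range $V_\kappa$ coincides in $V$ and $M$ and is pointwise fixed by $i$.

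With all four clauses verified, the statement ``there exist $\beta<i(\kappa)$ with $\alpha<\beta$ and a map $j\in M$ satisfying the conditions of clause (3) of Definition 2.2 for $i(S)$ and $\varphi$'' holds in $M$. Applying elementarity of $i$ then yields the corresponding existence below $\kappa$ in $V$ for $S$ and $\varphi$, giving $\alpha$-reflectivity of $\kappa$. The main obstacle I anticipate is the absoluteness step in the previous paragraph: one has to check that Tait's restriction to formulas whose only quantifiers inside $\psi$ are first-order is exactly strong enough to avoid any direct comparison of higher-order extensions of the $A_j$ or $f_\ell(X)$ between $V$ and $M$ (where they genuinely differ), while still allowing the argument to conclude via the $V_\kappa$-level first-order ``shadows'' of those higher-order objects (which coincide between $V$ and $M$).
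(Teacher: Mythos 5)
Your proof is correct and follows essentially the same route as the paper's: pass to the $(\alpha+1)$-strong embedding $i\colon V\to M$, observe that (a suitable restriction of) $i$ itself lies in $M$ and witnesses that $i(S)$ reflects down to $\kappa$ inside $M$, and then pull the existence of a reflecting $\beta<\kappa$ back to $V$ by elementarity of $i$. The only difference is one of detail: you make explicit both the amenability requirement $i\restriction V_{\kappa+\alpha}\in M$ and the clause-by-clause verification of the reflection conditions (including the absoluteness of $\psi$ over the pointwise-fixed $V_\kappa$), all of which the paper compresses into the single assertion that ``$k$ induces a mapping $j\in M$.''
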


\begin{proof} Suppose that $0<\alpha<\kappa$ and $\kappa$ is $\beth_{\kappa+\alpha+1}$-supercompact. Then there exists an elementary embedding $k:V\rightarrow M$ with critical point $\kappa$ such that $^{\beth_{\kappa+\alpha+1}}{M}\subset M$. Let $S=\langle \{V_{\kappa+\gamma}\mid \gamma<\alpha\}, \in, f_{1}, f_{2}, \ldots f_{k}, A_{1}, A_{2}, \ldots A_{n} \rangle$ be a structure and $\varphi$ a formula as in the definition of an $\alpha$-reflective cardinal. Working in $M$, consider the structure $k(S)$. Since $^{\beth_{\kappa+\alpha+1}}{M}\subset M$, the elementary embedding $k$ induces a mapping $j\in M$ as in the definition of an $\alpha$-reflective cardinal such that the structure $k(S)$ reflects down to $\kappa$ in $M$ via the mapping $j$. Since $k$ is an elementary embedding we may infer that there exists a $\delta<\kappa$ such that $S$ reflects down to $\delta$ in $V$. \end{proof}

\bigskip

Next we show that $\alpha$-reflective cardinals relativise to the constructible universe $L$.

\bigskip

\begin{theorem} Suppose that $\alpha<\kappa$ and $\kappa$ is $\alpha$-reflective. Then $\kappa$ is $\alpha$-reflective in the constructible universe $L$. \end{theorem}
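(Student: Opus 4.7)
The plan is to reduce the claim that $\kappa$ is $\alpha$-reflective in $L$ to the hypothesis that $\kappa$ is $\alpha$-reflective in $V$, by applying the latter to a carefully augmented $V$-structure built from any given $L$-structure. Given a structure $S=\langle\{V_{\kappa+\gamma}^{L}\}_{\gamma<\alpha},\in,f_{1},\ldots,f_{k},A_{1},\ldots,A_{n}\rangle$ in $L$ and a formula $\varphi$ of the form required in the definition of $\alpha$-reflectivity, true in $S$ inside $L$, I would first build an augmented $V$-structure $S^{*}$ as follows: replace each sort $V_{\kappa+\gamma}^{L}$ by the full $V$-sort $V_{\kappa+\gamma}$; extend each $f_{i}$ to a function $\tilde f_{i}$ on the full $V$-domain by sending any tuple containing a non-constructible set to a designated default value; keep the parameters $A_{i}$; and adjoin extra higher-order parameters $B_{\gamma}:=V_{\kappa+\gamma}^{L}$, viewed as subsets of $V_{\kappa+\gamma}$, for each relevant $\gamma<\alpha$, together with auxiliary parameters encoding the G\"odel well-ordering $<_{L}$ and the $L$-rank function on these levels.

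Next, I would transform $\varphi$ into a formula $\varphi^{*}$ of the required syntactic form by inserting the conjunction ``$X_{i}\in B_{l(\gamma_{i})}$ for every $i$'' as a hypothesis for the matrix $\psi$, and by relativising the first-order quantifiers of $\psi$ to the parameter $B_{0}=V_{\kappa}^{L}$. The new matrix contains only first-order quantifiers, so $\varphi^{*}$ has the permissible form, and the verification that $V\models S^{*}\ \text{satisfies}\ \varphi^{*}$ reduces to $L\models S\ \text{satisfies}\ \varphi$, because for constructible tuples the extended functions $\tilde f_{i}$ agree with the original $f_{i}$ and the relativised matrix agrees with its $L$-interpretation. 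Applying the hypothesis that $\kappa$ is $\alpha$-reflective in $V$ to the pair $(S^{*},\varphi^{*})$ then yields some $\beta<\kappa$ and a map $j:V_{\beta+\alpha}\to V_{\kappa+\alpha}$ witnessing the reflection of $\varphi^{*}$.

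The hard part is converting this $V$-witness $j$ into an $L$-witness $j'\in L$ for the reflection of $(S,\varphi)$ inside $L$ at the same $\beta$. The purpose of the auxiliary parameters $B_{\gamma}$, $<_{L}$, and the rank function is precisely to pin down $j$'s behaviour on the constructible parts: because these objects appear as parameters they are preserved under reflection, and together with the precondition ``$X_{i}\in B_{l(\gamma_{i})}$'' they should force $j$ to send $V_{\beta+\gamma}^{L}$ into $V_{\kappa+\gamma}^{L}$ for each relevant $\gamma$ in a way that commutes with the $L$-structure. Once that is established, $j\restriction V_{\beta+\alpha}^{L}$ is a map $L_{\beta+\alpha}\to L_{\kappa+\alpha}$ satisfying the requirements of the $L$-reflection, and taking the $<_{L}$-least such map yields a canonical $j'\in L$ by the internal well-ordering of $L$. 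The delicate step the proof has to handle is verifying that the augmented parameters are rich enough to compel $j$ to respect the constructibility hierarchy, so that the $L$-restriction step actually goes through.
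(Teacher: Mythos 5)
Your proposal follows essentially the same route as the paper: relativize the given $L$-formula to the constructible levels inside the full $V$-structure (you do this by guarding the matrix with the parameters $B_{\gamma}=V_{\kappa+\gamma}^{L}$ and default-extending the $f_{i}$, where the paper relativizes the quantifiers to $V_{\kappa+l(\gamma)}^{L}$ and then re-Skolemizes to restore the required syntactic form), apply $\alpha$-reflectivity in $V$, and then recover an $L$-internal witness via the canonical well-ordering of $L$. The step you explicitly flag as delicate --- compelling the $V$-witness $j$ to respect the constructibility hierarchy so that a witnessing map actually exists inside $L$ --- is precisely the step the paper itself dispatches in a single sentence, so your treatment is no less complete than the original.
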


\begin{proof} Suppose that $\alpha<\kappa$. Let $S^{L}=\langle \{V_{\kappa+\gamma}^{L}\mid \gamma<\alpha\}, \in, f_{1}, f_{2}, \ldots f_{k}, A_{1}, A_{2}, \ldots A_{n} \rangle\in L$ be a structure in $L$ as in Definition \ref{reflective}, and let $\varphi$ be a formula as in Definition \ref{reflective} that is true in $S^{L}$. We may consider the formula $\varphi^{L}$ with all $\gamma$-order quantifiers, for $\gamma<\alpha$, relativized to $V_{\kappa+l(\gamma)}^{L}$. Then $\varphi^{L}$ holds in $S=\langle\{V_{\kappa+\gamma}\mid\gamma<\alpha\},\in, f_{1}, f_{2}, \ldots f_{k}, A_{1}, A_{2}, \ldots A_{n}\rangle$. By introducing new Skolem functions for $\varphi^{L}$ into the structure $S$ to produce an expanded structure $S'$, we may replace $\varphi^{L}$ with a formula $\psi$ which is true in the expanded structure $S'$. Then since $\kappa$ is $\alpha$-reflective in $V$ then there must be a mapping $j$ which witnesses that $\psi$ reflects down to some $\beta<\kappa$. One can now make use of the canonical well-ordering of $L$. For each member of the well-ordering in the domain of $j\mid L$, one picks for the value of $j$ at this point the least member of the well-ordering which is "admissible" in the sense that the function $j$ constructed so far has an extension to the entire domain of $j\mid L$ with the desired properties. Since the well-ordering of $L$ is definable, in this way one can ensure that $j\mid_{L}\in L$. This shows that $\varphi$ reflects down from $S^{L}$ to $\beta$ in $L$. \end{proof}

\bigskip

Next we show that these cardinals are consistent relative to $\kappa(\omega)$.

\bigskip

\begin{theorem} Suppose that $\kappa(\omega)$ exists. Then the set of all $\lambda<\kappa(\omega)$ such that $\lambda$ is $\alpha$-reflective for all $\alpha$ such that $0<\alpha<\lambda$ is a stationary subset of $\kappa(\omega)$. \end{theorem}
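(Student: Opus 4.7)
The plan is to show stationarity by exhibiting, inside any club $C \subseteq \kappa(\omega)$, an ordinal $\lambda \in C$ which is $\alpha$-reflective for every $\alpha < \lambda$. I will produce such a $\lambda$ as the supremum of an $\omega$-sequence of Erd\H{o}s indiscernibles lying in $C$, and then use a further application of indiscernibility to build the reflection map $j$ required by clause (3) of the definition for each particular $\alpha$, $S$, $\varphi$.

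Fix a well-ordering $<_{w}$ of $V_{\kappa(\omega)}$ and consider the structure $\mathcal{A} = (V_{\kappa(\omega)}, \in, <_{w})$ equipped with the canonical Skolem functions derived from $<_{w}$. The standard strengthening of $\kappa(\omega) \to (\omega)^{<\omega}$ to countably many colours (due to Silver), combined with a pullback through the enumeration $e : \kappa(\omega) \to C$ of the club, produces an $\omega$-sequence $\lambda_{0} < \lambda_{1} < \cdots$ of order-indiscernibles for $\mathcal{A}$ with each $\lambda_{n} \in C$; by closure of $C$, $\lambda := \sup_{n}\lambda_{n} \in C$. To verify $\alpha$-reflectiveness, fix $\alpha < \lambda$ and pick $m$ with $\alpha < \lambda_{m}$, set $\beta := \lambda_{m}$, and let $S$ and $\varphi$ be as in the definition with $\varphi$ true in $S$. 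Form the Skolem hull $H = \mathrm{Sk}^{\mathcal{A}}(V_{\beta} \cup \{S\} \cup \{\lambda_{n} : n > m\})$ and consider the shift $\sigma$ of $H$ which fixes $V_{\beta}$ and $S$ and sends $\lambda_{n} \mapsto \lambda_{n+1}$ for $n > m$; upon transitive collapse of $H$, and using $\alpha < \beta$ to keep the relevant levels of the cumulative hierarchy below the moving indiscernibles, $\sigma$ induces a map $j : V_{\beta + \alpha} \to V_{\lambda + \alpha}$ which is the identity on $V_{\beta}$, is $\in$-preserving, and places $j(X) \in V_{\lambda + \gamma}$ whenever $X \in V_{\beta + \gamma}$. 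The truth of $\varphi$ in $S$ passes to the truth of its reflected form in $S^{\beta}$ via the elementarity of $\sigma$, yielding the reflection required by the definition.

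The principal obstacle is that the indiscernibles obtained in the first step are indiscernible for $\mathcal{A}$ alone, not over the arbitrary parameter $S$ which is only introduced afterward; for the shift $\sigma$ above to be elementary on $H$, indiscernibility over $S$ is what is actually needed. I anticipate resolving this by a further application of $\omega$-Erd\H{o}s to the enriched structure $(\mathcal{A}, S)$, producing a fresh $\omega$-sequence of indiscernibles for $(\mathcal{A}, S)$ that may be used in place of the tail $\{\lambda_{n} : n > m\}$ in the construction of $H$; the delicate point is arranging this fresh sequence to accumulate at $\lambda$ itself (rather than at some unrelated ordinal below $\kappa(\omega)$), which I expect to achieve by intersecting the stationary set of $(\mathcal{A}, S)$-indiscernible limits with a suitable club internal to $\lambda$. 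Verifying this accumulation, together with the bookkeeping for the transitive collapse's action on $V_{\beta + \alpha}$ so that all three structural conditions in clause (3) hold simultaneously, is where the main technical labour lies.
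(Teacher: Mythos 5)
Your proposal founders on precisely the obstacle you flag in your last paragraph, and the repair you sketch there cannot work. The shift $\sigma$ on $H=\mathrm{Sk}^{\mathcal{A}}(V_{\beta}\cup\{S\}\cup\{\lambda_{n}:n>m\})$ is elementary only if the tail of indiscernibles is indiscernible \emph{over} $V_{\beta}\cup\{S\}$, and $S$ is an arbitrary structure living up at $V_{\lambda+\alpha}$. Your proposed fix is to re-run the partition relation for the enriched structure $(\mathcal{A},S)$ and find a fresh $\omega$-sequence of $(\mathcal{A},S)$-indiscernibles accumulating at $\lambda$. But $\lambda$ was fixed (as a member of $C$) \emph{before} $S$ was given, and it must simultaneously handle every $S$ and every $\alpha<\lambda$; the set of $(\mathcal{A},S)$-indiscernibles produced by $\kappa(\omega)\to(\omega)^{<\omega}_{\gamma}$ is just some set of order type $\omega$ somewhere below $\kappa(\omega)$, with no reason to meet the interval $(\beta,\lambda)$ at all, let alone be cofinal in $\lambda$. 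The appeal to ``intersecting the stationary set of $(\mathcal{A},S)$-indiscernible limits with a club internal to $\lambda$'' is not available: your $\lambda$ has cofinality $\omega$, and in any case the partition relation yields a single $\omega$-sequence, not a stationary set of limits of such sequences through every club of every candidate $\lambda$. There is also a secondary gap: the hull $H$ meets $V_{\lambda+\alpha}$ in a set of size $|V_{\beta}|$, so the collapse gives at best a partial map, whereas clause (3) demands a total $j$ on $V_{\beta+\alpha}$; some extension device is needed.

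The paper's proof avoids the parameter problem by an elementarity trick that is the missing idea here. It takes the indiscernibles $\iota_{1},\iota_{2},\ldots$ for $\langle V_{\kappa(\omega)},\in,C\rangle$, lets $M$ be their Skolem hull, and sets $\lambda=\iota_{2}$ (an indiscernible itself, not a limit of them). It then proves that $\iota_{2}$ is $\alpha$-reflective \emph{inside $M$}, reflecting down to $\iota_{1}$: every structure $S$ that $M$ sees is a Skolem term in finitely many indiscernibles, so indiscernibility over $S$ comes for free from indiscernibility of the $\iota_{k}$ themselves, and the shift $\iota_{k}\mapsto\iota_{k+1}$ (suitably truncated and combined with a well-ordering of $(V_{\lambda})^{M}$ to totalize the map) supplies the witnessing $j$. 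Since ``$\lambda\in C$ and $\lambda$ is $\alpha$-reflective for all $\alpha<\lambda$'' is expressible in $\langle V_{\kappa(\omega)},\in,C\rangle$ and $M$ is an elementary substructure, the conclusion transfers to $V$. If you want to salvage your argument, replace ``handle every $S$ in $V$'' by ``handle every $S$ in the hull of the indiscernibles, then quote elementarity''; without that move the proof does not go through.
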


\begin{proof} Suppose that $\kappa=\kappa(\omega)$. Let $C$ be a closed unbounded subset of $\kappa$. We must show that there is a cardinal $\lambda\in C$ such that $\lambda$ is $\alpha$-reflective for all $\alpha$ such that $0<\alpha<\lambda$. Let $S=\{\iota_{1}, \iota_{2}, \ldots \}$ be a set of Silver indiscernibles for the structure $\langle V_{\kappa},\epsilon,C \rangle$. It can be shown that it is possible to find such a set such that all the $\iota_{i}$ are in $C$. Let $M$ be a Skolem hull of $S$ in this structure and let $\lambda=\iota_{2}$. Then the mapping $\iota_{k}\mapsto\iota_{k+1}$ induces an elementary embedding $j:M\rightarrow M$. Fix a well-ordering of $(V_{\lambda})^{M}$ in $M$ and define a set of Skolem functions with respect to this well-ordering. If we let $M_{n,n'}$ be the Skolem hull of $(V_{\lambda})^{M}\cup \{\iota_{1},\iota_{2},\ldots \iota_{n'}\}$ in $M$ for Skolem terms for formulas of complexity no greater than $\Sigma_{n}$, then the mapping $\iota_{k}\mapsto\iota_{k+1}$ induces a mapping $j_{n,n'}:M_{n,n'}\rightarrow M_{n,n'+1}$ which respects the Skolem functions for formulas of complexity no greater than $\Sigma_{n}$, and restrictions of this mapping to intersections of sets of the form $(V_{\lambda+\alpha})^{M}$ with the domain of the mapping - which we call truncations of the mapping - are members of $M$, because they are definable with parameters from $M$. If $\varphi$ is a formula with parameters $A_{1}, A_{2}, \ldots A_{m}$ as in the definition of an $\alpha$-reflective cardinal where $0<\alpha<\lambda$ then $\varphi$ will reflect down in $M$ from $\iota_{2}$ to $\iota_{1}$ by means of a truncation of the mapping $j_{n,n'}$ where $n$ and $n'$ are sufficiently large. This shows that $\iota_{2}$ is $\alpha$-reflective in $M$ and hence in $V$. \end{proof}

Next we establish some properties of $\omega$-reflective cardinals.

\section{Restricted versions of Tait's reflection principles}

In \cite{Tait2005a} Tait defines the following set of reflection principles.

\begin{Definition} A formula in the $n$th-order language of set theory, some $n<\omega$, is positive iff it is built up by means of the operations $\vee$, $\wedge$, $\forall$, $\exists$ from atoms of the form $x=y$, $x \neq y$, $x \in y$, $x \notin y$, $x \in Y^{(2)}$, $x \notin Y^{(2)}$ and $X^{(m)} = X'^{(m)}$ and
$X^{(m)} \in Y^{(m+1)}$, where $m \geq 2$. \end{Definition}

\begin{Definition} For $0<n<\omega$, $\Gamma^{(2)}_{n}$ is the class of formulas

\begin{equation} \forall X_{1}^{(2)} \exists Y_{1}^{(k_{1})} \cdots \forall X_{n}^{(2)} \exists Y_{n}^{(k_{n})} \varphi(X_{1}^{(2)}, Y_{1}^{(k_{1})}, \ldots, X_{n}^{(2)}, Y_{n}^{(k_{n})}, A^{(l_{1})}, \ldots A^{(l_{n'})}) \end{equation}

where $\varphi$ is positive and does not have quantifiers or second or higher-order and $k_{1}, \ldots k_{n}, l_{1}, \ldots l_{n'}$ are natural numbers. \end{Definition}

\begin{Definition} We say that $V_{\kappa}$ satisfies $\Gamma^{(2)}_{n}$-reflection if, for all $\varphi\in\Gamma^{(2)}_{n}$, if $V_{\kappa}\models\varphi(A^{(m_{1})},A^{(m_{2})},\ldots A^{(m_{p})})$ then $V_{\kappa}\models\varphi^{\delta}(A^{(m_{1}),\delta},A^{(m_{2}),\delta},\ldots A^{(m_{p}),\delta})$ for some $\delta<\kappa$. \end{Definition}

\begin{theorem} [\textbf{Koellner}] Suppose that $\kappa=\kappa(\omega)$ is the first $\omega$-Erd\"os cardinal. Then there exists a $\delta<\kappa$ such that $V_{\delta}$ satisfies $\Gamma^{(2)}_{n}$-reflection for all $n$. \end{theorem}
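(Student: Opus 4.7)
The approach mirrors the Silver-indiscernibles construction used in the previous theorem. Because $\kappa = \kappa(\omega)$, the $\omega$-Erd\"os property yields a set $I = \{\iota_1, \iota_2, \ldots\}$ of order-indiscernibles of order type $\omega$ for the structure $\langle V_{\kappa}, \in \rangle$ enriched with a canonical Skolem-function selector. Let $M$ be the Skolem hull of $I$; the shift $\iota_k \mapsto \iota_{k+1}$ extends to an elementary self-embedding $j : M \to M$. As in the previous theorem, let $M_{m,n'}$ denote the Skolem hull of $(V_{\iota_1})^{M} \cup \{\iota_1, \ldots, \iota_{n'}\}$ using Skolem terms for formulas of complexity at most $\Sigma_{m}$, so that the induced partial shifts $j_{m,n'} : M_{m,n'} \to M_{m,n'+1}$ have truncations lying in $M$. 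Set $\delta := \iota_{2}$.

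\bigskip

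\noindent To verify $\Gamma^{(2)}_{n}$-reflection at $V_{\delta}$, suppose
\[
\varphi \;\equiv\; \forall X_{1}^{(2)} \exists Y_{1}^{(k_{1})} \cdots \forall X_{n}^{(2)} \exists Y_{n}^{(k_{n})}\; \psi(X_{i}^{(2)}, Y_{i}^{(k_{i})}, A^{(l_{1})}, \ldots, A^{(l_{n'})})
\]
holds in $V_{\delta}$. Skolemize the higher-order existential quantifiers to obtain witness functions $f_{1}, \ldots, f_{n}$, yielding the universal form $\forall X_{1}^{(2)} \cdots \forall X_{n}^{(2)}\; \psi(X_{i}^{(2)}, f_{i}(\vec{X}), \vec{A})$. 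Choose $m$ large enough to cover the complexity of $\psi$ and $n'$ large enough so that $\vec{A}$ and the $f_{i}$ are named within the Skolem hull $M_{m,n'}$. A sufficiently long truncation of $j_{m,n'}$ then supplies a single-valued map from $V_{\iota_{1}}^{M}$ into $V_{\iota_{2}}^{M}$ respecting all relevant Skolem functions, witnessing that the Skolemized universal formula reflects in $M$ from $\iota_{2} = \delta$ down to $\iota_{1}$, with $\vec{A}$ and the $f_{i}$ replaced by their $\iota_{1}$-restrictions. By elementarity of $M$ in $V_{\kappa}$, the reflection is inherited by $V$, giving the required $\beta = \iota_{1} < \delta$ with $V_{\delta} \models \varphi^{\beta}$.

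\bigskip

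\noindent \textbf{Main obstacle.} The crux is that the \emph{same} $\delta$ must witness reflection for \emph{every} $n$. This is precisely the content of the $\omega$-Erd\"os property: because $I$ is an infinite sequence of indiscernibles, the shift $j$ is elementary at every finite complexity, so the single configuration with $\delta = \iota_{2}$ and reflection point $\iota_{1}$ handles all $\Gamma^{(2)}_{n}$-formulas uniformly. A secondary delicacy is that the parameters $A^{(l_{i})}$ may be of arbitrarily high finite order, for which unrestricted reflection is inconsistent (as noted earlier in the paper); however, the structural restriction that the inner $\psi$ carry only first-order quantifiers and that the outer alternations be purely second-order keeps the Skolemization tractable, and the indiscernibility of $I$ ensures that the $\iota_{1}$-restrictions of the higher-order parameters cohere correctly under the map induced by the truncated shift.
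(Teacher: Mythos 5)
The paper does not prove this statement: it is quoted as a theorem of Koellner and cited to \cite{Koellner2003}, so there is no in-paper proof to compare against. Your plan follows the same template as the paper's Theorem 2.4 (indiscernibles of order type $\omega$ below $\kappa(\omega)$, the Skolem hull $M$, the shift embedding, partial hulls $M_{m,n'}$ whose truncated shifts lie in $M$), which is the right family of ideas, and your point that a single $\delta=\iota_{2}$ handles every $n$ because the indiscernible sequence is infinite is correct.

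However, the sketch omits the one step that actually makes $\Gamma^{(2)}_{n}$-reflection work while the unrestricted $\Gamma^{(m)}_{n}$-reflection ($m>2$) is inconsistent. After you Skolemize and push the universal formula $\forall \vec{X}\,\psi(\vec{X},\vec{f}(\vec{X}),\vec{A})$ down via the truncated shift, what you obtain over $V_{\iota_{1}}$ is a statement about objects of the form $j(X^{(2)})$, i.e.\ about the range of the map, not yet about all second-order objects over $V_{\iota_{1}}$. To recover the original quantifier form $\forall X^{(2)}\exists Y\cdots$ relativized to $V_{\iota_{1}}$ you must argue that $X^{(2)}\mapsto j(X^{(2)})\cap V_{\iota_{1}}$ is surjective onto the second-order domain over $V_{\iota_{1}}$ (indeed it is the identity there, since the shift has critical point $\iota_{1}$); this is precisely the point the paper itself isolates in the proof of Theorem 3.9 (``the map $X^{(2)}\mapsto j(X^{(2)})\cap V_{\beta}$ is surjective on $V_{\beta+1}$''), and it is precisely what fails when the universally quantified variables have order $\geq 3$. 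A proof that never invokes this has not used the defining restriction of $\Gamma^{(2)}_{n}$ and would ``establish'' the inconsistent principle by the same words. A secondary defect is the order of quantifiers: you fix arbitrary parameters $A^{(l_{i})}$ (arbitrary subsets of $V_{\delta+l_{i}-1}$, which need not belong to the hull $M$) and then propose to run the argument inside $M$. The correct arrangement is to prove, for each fixed formula $\varphi$, the statement ``$V_{\iota_{2}}$ satisfies reflection for $\varphi$ with all parameters,'' which is first-order over $V_{\kappa}$ with parameter $\iota_{2}$, inside $M$, and only then transfer it to $V_{\kappa}$ using $M\prec V_{\kappa}$; elementarity runs in that direction, not from an arbitrary parameter of $V$ into $M$.
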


\begin{theorem} [\textbf{Tait}] Suppose that $n<\omega$ and $V_{\kappa}$ satisfies $\Gamma^{(2)}_{n}$-reflection. Then $\kappa$ is $n$-ineffable. \end{theorem}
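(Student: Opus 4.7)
The plan is to derive $n$-ineffability by contradiction, using the failure of the Baumgartner coherence condition to build a $\Gamma^{(2)}_n$ sentence whose reflection produces a concrete contradictory $n$-tuple at a smaller level. First I would fix the most convenient characterisation of $n$-ineffability: $\kappa$ is $n$-ineffable iff for every $F \colon [\kappa]^n \to V_\kappa$ with $F(s) \subseteq \min s$, there exist a stationary $S \subseteq \kappa$ and $A \subseteq \kappa$ such that $F(s) = A \cap \min s$ for all $s \in [S]^n$.

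Assume, aiming at a contradiction, that some $F$ witnesses the failure of $n$-ineffability. I would then encode this failure as a $\Gamma^{(2)}_n$ sentence $\varphi_F$, treating $F$ as a higher-order parameter and iterating the club-obstruction $n$ times to match the $n$-fold quantifier alternation. Schematically,
\[
 \varphi_F \ \equiv \ \forall A_1^{(2)} \exists C_1^{(2)} \ \forall A_2^{(2)} \exists C_2^{(2)} \ \cdots \ \forall A_n^{(2)} \exists C_n^{(2)} \ \psi(A_1, C_1, \dots, A_n, C_n, F),
\]
where $\psi$ is a first-order formula stipulating that each $C_i$ is a club with $C_i \subseteq C_{i-1}$, and that no strictly increasing $n$-tuple $(\alpha_1, \dots, \alpha_n)$ with $\alpha_i \in C_i$ realises the coherence condition $F(\alpha_1, \dots, \alpha_n) = A_1 \cap \alpha_1$ (suitably coupled to the later $A_i$'s, so that the iteration is nontrivial). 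Verifying that $V_\kappa \models \varphi_F$ amounts to iterating ``no stationary $F$-coherent set with $A = A_i$'' for $n$ successive rounds, using the failure assumption to extract a fresh club-obstruction at each round.

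Next I would apply $\Gamma^{(2)}_n$-reflection to $\varphi_F$ to obtain $\delta < \kappa$ with $V_\kappa \models \varphi_F^\delta$. This transfers the iterated failure to $V_\delta$ for $F \upharpoonright [\delta]^n$. The contradiction is forced by an explicit adversarial choice of the universally quantified $A_i^{(2)}$'s in the reflected sentence: taking each $A_i$ to be an initial segment (at an appropriate $\beta_i \leq \delta$) of a \emph{global} coherent family of sets arising from $F$-values on tuples whose first coordinate is $\delta$, we manufacture a concrete increasing $n$-tuple in $[\delta]^n$ satisfying the coherence relation, contradicting the reflected form of $\psi$.

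The main obstacle is calibrating the sentence $\varphi_F$ so that all three requirements hold simultaneously: (a) the block of quantifiers is exactly of $\Gamma^{(2)}_n$ shape, (b) it is genuinely true in $V_\kappa$ whenever $n$-ineffability fails for $F$, and (c) reflection with a cleverly chosen instantiation of the $A_i$'s yields a real contradiction rather than a tautological repetition at $\delta$. The higher-order existential quantifiers $\exists Y_i^{(k_i)}$ that $\Gamma^{(2)}_n$ permits are crucial here for packaging together the nested clubs with the Skolem functions that pick out the coherent tuple-candidates, which is what allows the $n$-fold iteration to be compressed into a single sentence and is where the bulk of Tait's original argument does its work.
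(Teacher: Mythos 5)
First, a point of reference: the paper does not prove this theorem --- it is quoted as background, attributed to Tait, with the proof living in \cite{Tait2005a} (see also \cite{Koellner2009}). So your proposal has to stand on its own, and as written it has a gap that I think is fatal rather than merely a matter of ``calibration''. The sentence $\varphi_F$ you build uses only second-order quantifiers: each universally quantified $A_i$ and each existentially quantified club $C_i$ is a subset of $V_\kappa$. Reflection for sentences of that shape is, up to routine rewriting, just $\Pi^1_{2n}$-indescribability of $\kappa$, and $\Pi^1_{m}$-indescribability for all $m$ is strictly weaker in consistency strength than even $1$-ineffability: an ineffable cardinal is subtle, and a subtle cardinal already has stationarily many totally indescribable cardinals below it. Hence no choice of a purely second-order $\varphi_F$ can yield $n$-ineffability. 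The feature of $\Gamma^{(2)}_n$ that carries the extra strength is exactly the one your sketch never actually exploits: the existential witnesses $Y_i^{(k_i)}$ may be of arbitrarily high finite order, and Tait's argument uses these higher-order existentials essentially (e.g.\ to quantify over third-order objects such as the assignment $A \mapsto C_A$ of obstructing clubs, or over coherent families of sets). Relegating them to a closing remark about ``packaging'' the clubs with Skolem functions skips the part of the proof where all the work is done.

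Second, even granting the shape of $\varphi_F$, the contradiction you promise in step (c) is not delivered, and the obvious instantiation fails. Take $n=1$: the failure of ineffability for a list $\langle A_\alpha : \alpha<\kappa\rangle$ says that for every $A\subseteq\kappa$ there is a club $C$ with $A_\alpha\neq A\cap\alpha$ for all $\alpha\in C$. Reflecting this to some $\delta<\kappa$ and instantiating $A:=A_\delta$ yields only a club $C\subseteq\delta$ on which $A_\alpha\neq A_\delta\cap\alpha$ --- a conclusion entirely compatible with the assumed failure of ineffability, not a contradiction. Your ``adversarial choice of the $A_i$'' is precisely where the theorem has to be proved, and as written it is a placeholder. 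I would rework the argument along the lines of Koellner's presentation in \cite{Koellner2009}, where the higher-order existential quantifier is used to reflect a statement involving a genuinely third-order witness and the coherent stationary set is then read off from the reflected witness, rather than trying to force a contradiction out of a purely second-order sentence.
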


\begin{theorem} [\textbf{Tait}] Suppose that $\kappa$ is measurable. Then $V_{\kappa}$ satisfies $\Gamma^{(2)}_{n}$-reflection for all $n<\omega$. \end{theorem}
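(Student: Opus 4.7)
The plan is to use the elementary embedding $j\colon V \to M$ given by a normal measure on $\kappa$, which has critical point $\kappa$ and satisfies $V_{\kappa+1} \subseteq M$ (since $M$ is closed under $\kappa$-sequences and elements of $V_{\kappa+1}$ can be coded as sequences of length at most $\kappa$ from $V_\kappa$). Fix $\varphi \in \Gamma^{(2)}_n$ with parameters $A_1^{(l_1)}, \ldots, A_{n'}^{(l_{n'})}$ such that $V_\kappa \models \varphi$. I would prove that inside $M$ the relativized statement $V_{j(\kappa)}^M \models \varphi^\kappa$ holds with parameters $j(\vec A)$, and then conclude by elementarity of $j$ that some $\delta < \kappa$ witnesses $V_\kappa \models \varphi^\delta$ in $V$. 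Equivalently, this shows that the set $\{\delta < \kappa : V_\kappa \models \varphi^\delta\}$ lies in the normal measure and is therefore non-empty.

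The verification of $V_{j(\kappa)}^M \models \varphi^\kappa$ proceeds by Skolemizing. Writing $\varphi$ as $\forall X_1^{(2)} \exists Y_1^{(k_1)} \cdots \forall X_n^{(2)} \exists Y_n^{(k_n)} \psi(\vec X, \vec Y, \vec A)$, from $V_\kappa \models \varphi$ I would select Skolem functions $F_i \colon (V_{\kappa+1})^i \to V_{\kappa+k_i-1}$ in $V$ such that the Skolemized kernel holds for every $\vec X \in (V_{\kappa+1})^n$. By elementarity the analogous Skolemized sentence with parameters $j(\vec A)$ and Skolem functions $j(F_i)$ holds in $V_{j(\kappa)}^M$. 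Restricting the universal quantifiers to $\vec X \in V_{\kappa+1} = V_{\kappa+1}^M$ and taking $j(F_i)(\vec X)$ as existential witnesses should then give $V_{j(\kappa)}^M \models \varphi^\kappa$, provided the witnesses land in the correct relativized level $V_{\kappa+k_i-1}^M$ and the matrix $\psi$ is preserved under this interpretation.

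The principal obstacle will be exactly the phenomenon Tait warns about in the quoted passage: atomic subformulas involving $\neq$ or $\notin$ between objects of third or higher order are not preserved under the restriction operation, and Skolem witnesses of order greater than two may not obviously lie in the $M$-level $V_{\kappa+k_i-1}^M$ rather than the full $V_{\kappa+k_i-1}^V$. I would control this by exploiting that $\Gamma^{(2)}_n$ formulas have purely second-order universal quantifiers, so the only higher-order terms appearing in the matrix $\psi$ are the fixed parameters $j(\vec A)$ and the explicit Skolem values $j(\vec F)(\vec X)$; an induction on the complexity of $\psi$ tracking each atomic formula type, together with absoluteness of $V_{\kappa+1}$ between $V$ and $M$, should then establish the required preservation. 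Once that preservation lemma is in place, pulling the statement back through $j$ yields a $\delta < \kappa$ with $V_\kappa \models \varphi^\delta$, completing the proof.
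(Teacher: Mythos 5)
The paper does not actually prove this theorem: it is stated as a cited result of Tait (proved in \cite{Tait2005a} and \cite{Koellner2003}), so there is no in-paper proof to compare against. Your outline is the standard argument and is essentially the same one the paper itself deploys later in the proof of Theorem 3.10: Skolemize the $\Gamma^{(2)}_{n}$ sentence, push the Skolem functions through the ultrapower embedding $j$, use $V_{\kappa+1}=V_{\kappa+1}^{M}$ and the fact that every element of $V_{\kappa+1}$ is of the form $j(X)\cap V_{\kappa}$ to handle the second-order universal quantifiers, and pull the witness $\delta=\kappa$ back through $j$. That skeleton is correct.

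The one step you leave genuinely under-specified is the preservation of the matrix, and it cannot be discharged by ``absoluteness of $V_{\kappa+1}$ between $V$ and $M$'' alone. The existential witnesses $j(F_{i})(\vec{X})$ live in $V_{j(\kappa)+k_{i}-1}^{M}$, not in $V_{\kappa+k_{i}-1}$; you must replace each such witness $Z$ (and each higher-order parameter $j(A^{(l)})$) by its restriction $Z^{\kappa}$ in the sense of Definition 2.1, and then argue that the matrix survives this replacement. The ingredient that makes this work is precisely the positivity of the matrix (Definition 3.1): the restriction operation preserves the atoms $X\in Y$ and $X=Y$ between higher-order objects, but not $\notin$ or $\neq$, so a matrix built only from $\vee,\wedge,\forall,\exists$ over positive atoms is preserved, whereas a non-positive one need not be. You gesture at ``tracking each atomic formula type'' but never invoke positivity, and the theorem is actually false without it: the counterexample of Section 2 (the third-order parameter $U^{(3)}$ consisting of all bounded initial segments of the ordinals) can be written with only first-order quantifiers and a negatively occurring atom $x\in A^{(3)}$, and hence would lie in $\Gamma^{(2)}_{1}$ if arbitrary matrices were allowed. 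So make the positivity hypothesis explicit and carry out the induction on positive formulas; with that added, your argument closes.
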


In \cite{Tait2005a} Tait proposes to define $\Gamma^{(m)}_{n}$ in the same way as the class of formulas $\Gamma^{(2)}_{n}$, except that universal quantifiers of order $\leq m$ are permitted. Koellner shows in \cite{Koellner2009} that this form of reflection is inconsistent when $m>2$. We formulate a new form of reflection which we will be able to prove holds for an $\omega$-reflective cardinal.

\begin{Definition} For $2\leq m<\omega$, $0<n<\omega$, $\Gamma^{*(m)}_{n}$ is the class of formulas

\begin{equation} \label{gammaequation} \forall X_{1}^{(k_{1})} \exists Y_{1}^{(l_{1})} \cdots \forall X_{n}^{(k_{n})} \exists Y_{n}^{(l_{n})} \psi(X_{1}^{(k_{1})}, Y_{1}^{(l_{1})}, \ldots, X_{n}^{(k_{n})}, Y_{n}^{(l_{n})}, A^{(m_{1})}, \ldots A^{(m_{p})}) \end{equation}

where $\psi$ is positive and does not have quantifiers or second or higher-order and \newline $k_{1}, \ldots k_{n}, l_{1}, \ldots l_{n}, m_{1}, \ldots m_{p}$ are natural numbers such that $l_{j} \geq k_{i}$ whenever $0<i\leq j \leq n$. \end{Definition}

\begin{Definition} We say that $V_{\kappa}$ satisfies $\Gamma^{*(m)}_{n}$-reflection if, for all $\varphi\in\Gamma^{*(m)}_{n}$, if $V_{\kappa}\models\varphi(A^{(m_{1})},A^{(m_{2})},\ldots A^{(m_{p})})$ then $V_{\kappa}\models\varphi^{\delta}(A^{(m_{1}),\delta},A^{(m_{2}),\delta},\ldots A^{(m_{p}),\delta})$ for some $\delta<\kappa$. \end{Definition}

We shall now prove that if $\kappa$ is $\omega$-reflective then $V_{\kappa}$ satisfies $\Gamma^{*(m)}_{n}$-reflection for all $m\geq 2, n>0$. Note that $\Gamma^{*(2)}_{n}$-reflection is the same as $\Gamma^{(2)}_{n}$-reflection.

\begin{theorem} Suppose that $\kappa$ is $\omega$-reflective. Then $V_{\kappa}$ satisfies $\Gamma^{*(m)}_{n}$-reflection for all $m\geq 2, n>0$. \end{theorem}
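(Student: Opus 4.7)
The plan is to Skolemize the $\Gamma^{*(m)}_n$-formula, apply $\omega$-reflectivity of $\kappa$ to the resulting purely universal formula, and then pull the Skolem outputs back to $V_\beta$ by means of Tait-style truncation. Fix a formula $\varphi = \forall X_1^{(k_1)} \exists Y_1^{(l_1)} \cdots \forall X_n^{(k_n)} \exists Y_n^{(l_n)} \psi(X_i, Y_i, A_j)$ in $\Gamma^{*(m)}_n$ with $V_\kappa \models \varphi(A_1^{(m_1)}, \ldots, A_p^{(m_p)})$. Using the axiom of choice I pick Skolem functions $f_i : V_{\kappa + l(k_1)} \times \cdots \times V_{\kappa + l(k_i)} \to V_{\kappa + l(l_i)}$ witnessing the existential $Y_i$'s, then form the structure $S = \langle \{V_{\kappa + \gamma} \mid \gamma < \omega\}, \in, f_1, \ldots, f_n, A_1, \ldots, A_p \rangle$ together with the universal formula $\varphi^{\flat} = \forall X_1^{(k_1)} \cdots \forall X_n^{(k_n)}\, \psi(X_i, f_i(X_1, \ldots, X_i), A_j)$, which is true in $S$ by construction and has exactly the shape required by the definition of $\alpha$-reflective with $\alpha = \omega$.

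Applying $\omega$-reflectivity of $\kappa$ to $S$ and $\varphi^{\flat}$ yields $\beta$ with $\omega < \beta < \kappa$ and a mapping $j : V_{\beta + \omega} \to V_{\kappa + \omega}$ satisfying the properties listed in clause (3) of the definition, such that in $S^\beta$ the reflected assertion $\forall X_1^{(k_1)} \cdots \forall X_n^{(k_n)}\, \psi(j(X_1), f_1(j(X_1)), \ldots, j(X_n), f_n(j(X_1), \ldots, j(X_n)), A_1, \ldots, A_p)$ holds, with each $X_i$ ranging over $V_{\beta + l(k_i)}$. I then propose to witness the existentials of $\varphi^\beta$ in $V_\beta$ via the strategy $Y_i := (f_i(j(X_1), \ldots, j(X_i)))^\beta \in V_{\beta + l(l_i)}$; the residual claim is that $V_\beta \models \psi^\beta(X_1, Y_1, \ldots, X_n, Y_n, A_1^\beta, \ldots, A_p^\beta)$ for every such tuple.

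The residual claim is proved by structural induction on the first-order formula $\psi$, using that $j$ is the identity on $V_\beta$ and forward $\in$-preserving, together with the fact that the truncation $(\cdot)^\beta$ commutes properly with the Tait-positive atomic shapes $X = X'$ at a common order and $X \in Y$ at adjacent orders. The side condition $l_i \geq k_j$ for $i \leq j$ is used decisively here: in the typed higher-order language, it forces every atom of $\psi$ that relates an earlier Skolem witness $Y_i$ to a later universal variable $X_j$ to be of the form $X_j = Y_i$ (when $l_i = k_j$) or $X_j \in Y_i$ (when $l_i = k_j + 1$), which are precisely the Tait-positive shapes preserved by truncation. Under this constraint the relevant atoms across the $Y_i$/$X_j$ boundary are exactly the ones for which truth in $S^\beta$ with the $V_\kappa$-land arguments $j(X_i), f_i(j(\cdots))$ transports to truth in $V_\beta$ with the truncated arguments $X_i, Y_i, A_j^\beta$.

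The principal obstacle is this preservation lemma: verifying that the $\Gamma^{*(m)}_n$ side condition excludes precisely the atomic shapes---such as $X \neq X'$ or $X \notin Y$ between objects of order at least two, and ``downward'' membership atoms---that Tait identified in \cite{Tait2005a} as the obstacle to higher-order reflection. Once the induction has been checked for every atom shape permitted by the typing discipline under the $l_i \geq k_j$ condition, the strategy for the $Y_i$'s defined above witnesses the $\Gamma^{*(m)}_n$-reflection of $\varphi$ with $\delta = \beta$, completing the proof.
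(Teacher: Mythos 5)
Your first two paragraphs coincide with the paper's proof: Skolemize $\varphi$, package the Skolem functions into a structure $S$ of the form required by the definition of an $\omega$-reflective cardinal, and apply reflectivity to obtain $\beta$ and $j$. The divergence is in the pull-back step. The paper replaces $j$ by the hereditarily generated map $j'$ (agreeing with $j$ on $V_{\beta+1}$ and satisfying $j'(X)=\{j'(Y)\mid Y\in X\}$ above that) and argues via a factorization $j=k\circ j'$, composing the Skolem functions with $k$; you instead truncate the Skolem outputs to $V_{\beta}$-objects and appeal to a preservation-under-truncation induction on $\psi$. That induction is exactly where your argument has a genuine gap, and you do not carry it out --- you describe it as ``the principal obstacle'' and then assert the conclusion.

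Two concrete problems. First, your justification rests on the claim that the side condition $l_{i}\geq k_{j}$ (for $i\leq j$) forces every atom of $\psi$ crossing the $Y_{i}$/$X_{j}$ boundary to be one of the Tait-positive shapes $X_{j}=Y_{i}$ or $X_{j}\in Y_{i}$. It does not: that condition constrains the \emph{orders} of the quantified variables, not the \emph{polarity} of atoms in the matrix. The definition of $\Gamma^{*(m)}_{n}$ only requires $\psi$ to be free of higher-order quantifiers, so atoms such as $Y_{i}\neq Y_{i'}$ or $X_{j}\notin Y_{i}$ between higher-order objects are not excluded, and truncation does not preserve them; you have conflated the ordering restriction (the paper's new ingredient) with Tait's positivity restriction, which the stated definition of $\Gamma^{*(m)}_{n}$ does not impose. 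Second, even for the positive atom $X_{j}\in Y_{i}$ your strategy needs $(j(X_{j}))^{\beta}=X_{j}$ in order to pass from $j(X_{j})\in f_{i}(j(X_{1}),\ldots,j(X_{i}))$ to $X_{j}\in (f_{i}(j(X_{1}),\ldots,j(X_{i})))^{\beta}$; the listed properties of $j$ (identity on $V_{\beta}$, forward $\in$-preservation) give only the inclusion $X_{j}\subseteq (j(X_{j}))^{\beta}$ for second-order $X_{j}$, not equality, so even the case you single out as unproblematic is unproved. Until the preservation lemma is actually established --- which is the work the paper's $j'$-and-factorization device is meant to do --- the proof is incomplete at its decisive step.
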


\begin{proof} Suppose that $\varphi\in\Gamma^{*(m)}_{n}$ is true in $\langle\{V_{\kappa+n}\mid n\in\omega\},\in, \ldots\rangle$ and that $\varphi$ is as in (2). There must exist functions $f_{1}, f_{2}, \ldots f_{n}$ such that

\begin{equation} \forall X_{1}^{(k_{1})} \ldots \forall X_{n}^{(k_{n})} \psi(X_{1}^{(k_{1})}, f_{1}(X_{1}^{(k_{1})}), \ldots X_{n}^{(k_{n})}, f_{n}(X_{1}^{(k_{1})}, X_{2}^{(k_{2})}, \ldots X_{n}^{(k_{n})}), A^{(m_{1})}, \ldots A^{(m_{p})}) \end{equation}

is true in $\langle \{V_{\kappa+n}\mid n\in\omega\}, \in, \ldots \rangle$. Since $\kappa$ is $\omega$-reflective there will be some $\beta<\kappa$ and a function $j:V_{\beta+\omega}\rightarrow V_{\kappa+\omega}$ as in Definition \ref{reflective} such that

\begin{equation} \forall X_{1}^{(k_{1})} \ldots \forall X_{n}^{(k_{n})} \psi(j(X_{1}^{(k_{1})}), f_{1}(j(X_{1}^{(k_{1})})), \ldots j(X_{n}^{(k_{n})}), f_{n}(j(X_{1}^{(k_{1})}), j(X_{2}^{(k_{2})}), \ldots \end{equation} $j(X_{n}^{(k_{n})})), A^{(m_{1})}, \ldots A^{(m_{p})})$

\bigskip

\noindent is true in $S=\langle\{V_{\beta+n}\mid n\in\omega\},\in,\ldots\rangle$. As Koellner observes in \cite{Koellner2003}, when $k_{i}=2$ for each $i$ this is enough to prove $\Gamma^{(2)}_{n}$-reflection. This is because the map $X^{(2)}\mapsto j(X^{(2)}\cap V_{\beta}$ is surjective (in fact, the identity) on $V_{\beta+1}$, and since $\psi$ is positive the truth-value of the formula in $S$, for a particular valuation of the free variables, never passes from true to false on relativisations of all the free variables and parameters (and the relativisations of the Skolem functions are still single-valued). To establish $\Gamma^{*(m)}_{n}$-reflection for $m>2$, we replace $j$ in the above formula with a function $j'$ which agrees with $j$ on $V_{\beta+1}$, and satisfies $j'(X)=\{j'(Y)\mid Y\in X\}$ on $V_{\beta+k}\setminus V_{\beta+k-1}$, for $k=2, \ldots m$. Now the mapping $X^{(k)}\mapsto j(X^{(k)})\cap V_{\beta+k-1}$ is the identity for $k=2, \ldots m$. We also replace all the parameters $A^{(m_{i})}$ with paremeters $A'^{(m_{i})}$, chosen to as to be the same as the original paremeters except that every element of the transitive closure of the form $j(X)$ is replaced by $j'(X)$. Because $l_{j}\geq k_{i}$ for $0<i\leq j<n$, it is now possible to modify the Skolem functions in such a way that all the membership relations between images of free variables under $j'$, parameters, and images of free variables under $f_{i}\circ j'$ where $f_{i}$ is one of the Skolem functions, will be the same as they were before when we were using the old parameters and $j$ instead of $j'$. Hence the formula (4) will be true in $S$, and then the result of relativising all the parameters and free variables in the formula to $\beta$ will be true in $\{V_{\beta+n}\mid n\in\omega\}, \in, \ldots \rangle$. This completes the proof. \end{proof}

It is also easy to see by examining Koellner's proofs in \cite{Koellner2009} that $\omega$-reflective cardinals satisfy the reflection principles which he proves consistent there.

\bigskip

It is plausible to regard $\alpha$-reflective cardinals as the natural generalization of Tait's proposed reflection principles. The fact that they do not break the $\kappa(\omega)$ barrier provides further evidence for the view that Koellner has expressed in \cite{Koellner2009} that no reflection principle does so, and reflection principles are not sufficient to effect a significant reduction in incompleteness of ZFC.

\bib

\end{document}